\title[Irreducible Polyadic Semigroups]{Irreducible Polyadic Semigroups Admitting the Adjunction of a Neutral Element}
\author{Jean-Luc Marichal}
\address{University of Luxembourg, Department of Mathematics, Maison du Nombre, 6, avenue de la Fonte, L-4364 Esch-sur-Alzette, Luxembourg}
\email{jean-luc.marichal@uni.lu}
\author{Pierre Mathonet}
\address{University of Li\`ege, Department of Mathematics, All\'ee de la D\'ecouverte, 12 - B37, B-4000 Li\`ege, Belgium}
\email{p.mathonet@uliege.be}
\author{Tam\'as Waldhauser}
\address{University of Szeged, Bolyai Institute, Aradi v\'ertan\'uk tere 1, H-6720 Szeged, Hungary}
\email{twaldha@math.u-szeged.hu}
\date{September 16, 2025}
\begin{document}

\theoremstyle{plain}

\newtheorem{theorem}{Theorem}[section]
\newtheorem{lemma}[theorem]{Lemma}
\newtheorem{proposition}[theorem]{Proposition}
\newtheorem{corollary}[theorem]{Corollary}
\newtheorem{fact}[theorem]{Fact}

\theoremstyle{definition}

\newtheorem{definition}[theorem]{Definition}

\newtheorem{ex}[theorem]{Example} 
    \newenvironment{example}    
    {\renewcommand{\qedsymbol}{$\lozenge$}%
    \pushQED{\qed}\begin{ex}}
    {\popQED\end{ex}}

\newtheorem{rem}[theorem]{Remark} 
    \newenvironment{remark}    
    {\renewcommand{\qedsymbol}{$\lozenge$}%
    \pushQED{\qed}\begin{rem}}
    {\popQED\end{rem}}

\theoremstyle{remark}

\newtheorem{claim}{Claim}

\newcommand{\R}{\mathbb{R}}
\newcommand{\N}{\mathbb{N}}
\newcommand{\Z}{\mathbb{Z}}
\newcommand{\id}{\mathrm{id}}

\newcommand{\wt}[1]{\marginpar{\hrule\medskip Tamás: #1}}
\newcommand{\abs}[1]{\left\vert #1 \right\vert}

\begin{abstract}
It was claimed in \cite{DudMuk06} that for any integer $n\geqslant 2$, a neutral element can be adjoined to an $n$-ary semigroup if and only if the $n$-ary semigroup is reducible to a binary semigroup. We show that the `only if' direction of this statement is incorrect when $n$ is odd. Moreover, we offer a comprehensive characterization of the class of irreducible $n$-ary semigroups, for odd $n$, that admit the adjunction of a neutral element.
\end{abstract}

\keywords{Polyadic semigroup, Reducibility, Neutral element}

\subjclass[2020]{16B99, 20M99, 20N15}

\maketitle

\section{Introduction}

Let $X$ be a nonempty set and let $n\geqslant 2$ be an integer. Recall that an $n$-ary operation $F\colon X^n \to X$ is said to be \emph{associative} if it satisfies the following system of functional equations:
\begin{eqnarray*}
\lefteqn{F(x_1,\ldots,x_{i-1},F(x_i,\ldots,x_{i+n-1}),x_{i+n},\ldots,x_{2n-1})}\\
&=& F(x_1,\ldots,x_i,F(x_{i+1},\ldots,x_{i+n}),x_{i+n+1},\ldots,x_{2n-1})
\end{eqnarray*}
for all $x_1,\ldots,x_{2n-1}\in X$ and all $1\leqslant i\leqslant n-1$. The pair $(X,F)$ is commonly known as an \emph{$n$-ary semigroup}, or alternatively, a \emph{polyadic semigroup} when the arity $n$ is not specified. This concept was introduced by Dörnte \cite{Dor28} and has subsequently led to the development of the notion of an $n$-ary group, which was first studied by Post \cite{Pos40}.

An associative $n$-ary operation $F\colon X^n\to X$ is said to be \emph{reducible to} (or \emph{derived from}) an associative binary operation $\circ\,\colon X^2\to X$ if it satisfies the following equation (where $\circ$ is used in its standard infix notation):
\begin{eqnarray*}
F(x_1,x_2,x_3\ldots,x_n) &=& (\, \cdots\, ((x_1\circ x_2)\circ x_3) \circ \,\cdots \, )\circ x_n\\
&=& x_1\circ x_2\circ x_3\circ\,\cdots\,\circ x_n \qquad (x_1,x_2,x_3,\ldots,x_n\in X).
\end{eqnarray*}
In this case, the $n$-ary semigroup $(X,F)$ is also said to be \emph{reducible to} (or \emph{derived from}) the semigroup $(X,\circ)$.

Let us provide a straightforward example of an irreducible ternary semigroup.

\begin{example}\label{ex:ternary1}
The real associative operation $F\colon\R^3\to\R$, defined by
$$
F(x_1,x_2,x_3) ~=~ x_1-x_2+x_3\qquad\text{for $x_1,x_2,x_3\in\R$},
$$
is irreducible. Indeed, suppose that this operation is reducible to an associative binary operation $\circ\,\colon \R^2\to \R$ and let $c=0\circ 0$. Then, for any $x\in\R\setminus\{c\}$, we have $c\circ x=x=x\circ c$ and hence also $x=c\circ x\circ c=2c-x$, which implies $x=c$, a contradiction.
\end{example}

Recall also that an element $e\in X$ is said to be \emph{neutral} for an $n$-ary operation $F\colon X^n\to X$ (or an $n$-ary semigroup $(X,F)$) if, for any $x\in X$, the following condition is satisfied:
$$
F(e^{k-1},x,e^{n-k}) ~=~ x\qquad (k=1,\ldots,n).
$$
Here and throughout, for any $k\in\{0,\ldots,n\}$ and any $x\in X$, the symbol $x^k$ represents the list $x,\ldots,x$, with $k$ copies of $x$. For instance, for any $x,y,z\in X$, we have
$$
F(x^3,y^0,z^2) ~=~ F(x,x,x,z,z).
$$
Recall that, when $n=2$, the neutral element is unique and often also called an {\em identity} element.

Dudek and Mukhin \cite[Proposition~2]{DudMuk06} made the following appealing and intriguing claim: ``\emph{one can adjoin a neutral element to an $n$-ary semigroup if and only if the $n$-ary semigroup is reducible to a binary semigroup}'' (see Definition~\ref{de:eAug22} below for the formal definition of adjoining a neutral element). 

The main objective of this paper is to present a counter-example that unfortunately disproves the `only if' direction of their statement when $n$ is an odd integer. This means that \emph{there exists at least one irreducible $n$-ary semigroup, where $n$ is odd, that admits the adjunction of a neutral element}.

The structure of the paper is as follows. In Section 2, we revisit the `if' direction of Dudek and Mukhin’s statement and establish that the `only if' direction holds when $n$ is even. In Section 3, we provide a comprehensive characterization of the class of irreducible $n$-ary semigroups, for odd $n$, that admit the adjunction of a neutral element. Section 4 presents a method for constructing such $n$-ary semigroups, along with explicit examples that serve to refute the `only if' direction when $n$ is odd.

\section{Preliminaries}

Let us review two important results given by Dudek and Mukhin. We begin by examining the following straightforward lemma, which complements \cite[Lemma~1]{DudMuk06}.

\begin{lemma}\label{lemma:21-firstL4}
If an $n$-ary semigroup $(X,F)$ has a neutral element $e\in X$, then the structure $(X,\circ)$ defined by the equation
$$
x\circ y ~=~ F(x,e^{n-2},y)\qquad\text{for $x,y\in X$},
$$
is a semigroup and, for any $k\in\{1,\ldots,n\}$, we have
$$
x_1\circ\,\cdots\,\circ x_k ~=~ F(x_1,\ldots,x_k,e^{n-k})\qquad (x_1,\ldots,x_k\in X).
$$
In particular, $(X,F)$ is reducible to the semigroup $(X,\circ)$; this semigroup also has $e$ as its neutral element and is the unique reduction of $(X,F)$ having this property.
\end{lemma}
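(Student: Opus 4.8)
The plan is to verify each assertion of the lemma in turn, using only the associativity of $F$ and the neutral-element identities $F(e^{k-1},x,e^{n-k})=x$.

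First I would show that $\circ$ is associative. Writing out $(x\circ y)\circ z = F(F(x,e^{n-2},y),e^{n-2},z)$, I would like to ``absorb'' the inner occurrence of $F$ by a sequence of applications of the associativity law for $F$; the target is the fully expanded expression $F(x,e^{n-2},y,e^{n-2},z)$ with an appropriate number of copies of $e$ trimmed back down to a single $n$-ary term, and then a symmetric computation would reduce $x\circ(y\circ z)$ to the same normal form. Concretely, one shows by induction that the ``long product'' $F(x_1,\dots,x_k,e^{n-k})$ is well defined and equals $x_1\circ\cdots\circ x_k$ (associativity of $\circ$ then being the case $k=3$ read two ways). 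The inductive step takes $F(x_1,\dots,x_k,e^{n-k})$, replaces the block $F(x_1,\dots,x_{n-1},e)\cdot$-style sub-term appropriately, and reassociates so that one more variable $x_{k+1}$ is pulled in while one copy of $e$ is absorbed by the neutrality identity; keeping careful track of which of the $n-1$ associativity equations (indexed by $i$) is being invoked at each stage, and the exact positions of the $e$'s, is what needs attention. This ``long product'' formula immediately yields that $(X,F)$ is reducible to $(X,\circ)$, since the case $k=n$ reads $F(x_1,\dots,x_n)=x_1\circ\cdots\circ x_n$.

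Next I would check that $e$ is the identity of $(X,\circ)$: $e\circ x = F(e,e^{n-2},x)=F(e^{n-1},x)=x$ by neutrality (the $k=n$ case), and symmetrically $x\circ e=x$. Finally, for uniqueness, suppose $(X,\bullet)$ is any semigroup with identity $e'$ to which $(X,F)$ reduces. Reducibility forces $F(x,e^{n-2},y) = x\bullet e\bullet\cdots\bullet e\bullet y$; but first one must pin down $e$: applying the reduction to $F(e^{k-1},x,e^{n-k})=x$ gives $e^{\bullet(k-1)}\bullet x\bullet e^{\bullet(n-k)}=x$ for all $x$ and all $k$, and taking $k=1$, $x=e'$ shows $e^{\bullet(n-1)}=e'$, hence (iterating, or taking $k=1,2$) $e=e'$, so $e$ is the identity of $\bullet$ as well. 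Then $x\bullet y = x\bullet e^{\bullet(n-2)}\bullet y = F(x,e^{n-2},y)=x\circ y$, giving $\bullet=\circ$.

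The main obstacle is the first step: carrying out the reassociation that proves the long-product formula $F(x_1,\dots,x_k,e^{n-k})=x_1\circ\cdots\circ x_k$ cleanly. The bookkeeping — tracking block boundaries, the index $i$ of the associativity equation used, and the count of $e$'s consumed by neutrality at each stage — is where an honest argument has to be careful; everything after that (identity element, reducibility, uniqueness) is a short formal consequence. A convenient way to organize it is to prove simultaneously, by induction on $k$, that the value $F(x_1,\dots,x_k,e^{n-k})$ does not depend on how any intermediate $F$-applications are bracketed, which is exactly the statement that the $k$-fold $\circ$-product is well defined.
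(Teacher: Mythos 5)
Your overall route is the same as the paper's: prove $x_1\circ\cdots\circ x_k=F(x_1,\ldots,x_k,e^{n-k})$ by induction on $k$, read off reducibility at $k=n$, check that $e$ is the identity of $\circ$, and get uniqueness from $x\bullet y=x\bullet e\bullet\cdots\bullet e\bullet y=F(x,e^{n-2},y)$. But the one step that requires an actual idea is precisely the step you defer. As you describe it (``pull in $x_{k+1}$ while one copy of $e$ is absorbed''), the inductive step does not go through by direct reassociation: starting from $(x_1\circ\cdots\circ x_k)\circ x_{k+1}=F\bigl(F(x_1,\ldots,x_k,e^{n-k}),e^{n-2},x_{k+1}\bigr)$ and sliding the inner $F$ rightwards with the associativity law, you arrive (for $k\leqslant n-2$) at $F(x_1,\ldots,x_k,e^{n-k-1},x_{k+1})$, i.e.\ the new variable is still separated from the block $x_1,\ldots,x_k$ by copies of $e$, which is not the form $F(x_1,\ldots,x_{k+1},e^{n-k-1})$ needed to continue. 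The missing device, which is the core of the paper's proof, is to pad with $e$'s via neutrality, writing $z=F(z,e^{n-1})$ for $z=F(x_1\circ\cdots\circ x_k,e^{n-2},x_{k+1})$; this creates room to reassociate so that the inner term $F(e^{n-2},x_{k+1},e)$ collapses to $x_{k+1}$, yielding $F(x_1\circ\cdots\circ x_k,x_{k+1},e^{n-2})$, after which the induction hypothesis and one further reassociation (collapsing $F(e^{n-k},x_{k+1},e^{k-1})=x_{k+1}$) finish the step. Alternatively you could prove the stronger statement that $F$ applied to $x_1,\ldots,x_k$ in order with $n-k$ copies of $e$ interspersed arbitrarily does not depend on the placement of the $e$'s, but either way this padding-and-shifting argument must be supplied; it is not mere bookkeeping. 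Note also that your simultaneous ``well-definedness'' induction is unnecessary: the paper first checks associativity of $\circ$ in one line, $(x\circ y)\circ z=F(F(x,e^{n-2},y),e^{n-2},z)=F(x,e^{n-2},F(y,e^{n-2},z))=x\circ(y\circ z)$, so the $k$-fold $\circ$-product is unambiguous before the induction starts.

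In the uniqueness part you attempt to prove more than the lemma claims, and the extra claim is false. From $e^{\bullet(n-1)}=e'$ one cannot conclude $e=e'$: take $n=3$, $X=\Z_2$, $F(x,y,z)=x+y+z$; then $e=0$ is neutral for $F$, and $x\bullet y=x+y+1$ is a reduction of $(X,F)$ whose identity is $e'=1\neq e$ (indeed $0\bullet 0=1=e'$, consistent with your computation, yet $e\neq e'$). So a reduction of $(X,F)$ may well have an identity different from $e$, and the step ``hence $e=e'$'' is wrong. Fortunately the lemma only asserts uniqueness among reductions having $e$ itself as neutral element, and for those your final computation $x\bullet y=x\bullet e^{\bullet(n-2)}\bullet y=F(x,e^{n-2},y)=x\circ y$ is exactly the paper's argument; simply drop the attempted identification of $e'$ with $e$.
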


\begin{proof}
We first observe that the operation $\circ$ defined in the statement is associative. In fact, for any $x,y,z\in X$, we have
\begin{eqnarray*}
(x\circ y)\circ z &=& F(F(x,e^{n-2},y),e^{n-2},z)\\
&=& F(x,e^{n-2},F(y,e^{n-2},z)) ~=~ x\circ (y\circ z).
\end{eqnarray*}
Moreover, it is clear that $e$ is also the neutral element for $\circ$.

Let us now prove the stated formula by induction on $k$. The identity holds tri{\-}vially for $k=1$ since $e$ is a neutral element for $F$. Assume now that it holds for some $1\leqslant k\leqslant n-1$, and let us show that it also holds for $k+1$. By using associativity and the fact that $e$ is a neutral element for $F$, we obtain
\begin{eqnarray*}
(x_1\circ\cdots\circ x_k)\circ x_{k+1} &=& F(x_1\circ\cdots\circ x_k,e^{n-2},x_{k+1})\\
&=& F(F(x_1\circ\cdots\circ x_k,e^{n-2},x_{k+1}),e^{n-1})\\
&=& F(x_1\circ\cdots\circ x_k,F(e^{n-2},x_{k+1},e),e^{n-2})\\
&=& F(x_1\circ\cdots\circ x_k,x_{k+1},e^{n-2})\\
&=& F(F(x_1,\ldots,x_k,e^{n-k}),x_{k+1},e^{n-2})\\
&=& F(x_1,\ldots,x_k,F(e^{n-k},x_{k+1},e^{k-1}),e^{n-k-1})\\
&=& F(x_1,\ldots,x_k,x_{k+1},e^{n-k-1}),
\end{eqnarray*}
which shows that the identity still holds for $k+1$. Taking $k=n$ immediately establishes the reducibility of $(X,F)$.

Now, suppose that $(X,\diamond)$ is a reduction of $(X,F)$ that has $e$ as the neutral element. Then, for any $x,y\in X$, we have
$$
x\diamond y ~=~ \underbrace{x\diamond e\diamond\,\cdots\,\diamond e\diamond y}_{n} ~=~ F(x,e^{n-2},y) ~=~ x\circ y.
$$
Therefore, the semigroup $(X,\circ)$ is the unique reduction of $(X,F)$ that has $e$ as the neutral element.
\end{proof}

It is well known and easy to verify (see, e.g., Clifford and Preston \cite[p.~4]{CliPre61}) that a neutral element can always be adjoined to a semigroup. In the next proposition, we restate \cite[Proposition~1]{DudMuk06} and its proof, using our notation; it asserts that this property also holds for any reducible $n$-ary semigroup. We begin with a formal definition.

\begin{definition}\label{de:eAug22}
We say that \emph{a neutral element can be adjoined to an $n$-ary semigroup} $(X,F)$ if there exists an $n$-ary semigroup $(X^*,F^*)$, where $X^*=X\cup\{e\}$ for some $e\notin X$, such that the operation $F^*\colon (X^*)^n\to X^*$ satisfies the following two properties:
\begin{itemize}
\item $F^*|_{X^n}=F$;
\item $e$ is neutral for $F^*$.
\end{itemize}
\end{definition}

\begin{proposition}\label{prop:2NeImRe44}
A neutral element can be adjoined to any reducible $n$-ary semigroup.
\end{proposition}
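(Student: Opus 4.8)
The plan is to lift the classical binary construction through the reduction. Suppose $(X,F)$ is reducible to a semigroup $(X,\circ)$. First I would adjoin a neutral element to $(X,\circ)$ in the standard way: set $X^*=X\cup\{e\}$ with $e\notin X$ and extend $\circ$ to an operation $\circ^*\colon (X^*)^2\to X^*$ by declaring $e$ to be an identity, that is, $x\circ^* y=x\circ y$ for $x,y\in X$, and $e\circ^* z=z=z\circ^* e$ for every $z\in X^*$. It is well known (and checked by a short case analysis on how many of the three arguments equal $e$) that $(X^*,\circ^*)$ is again a semigroup.

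Next I would let $F^*\colon (X^*)^n\to X^*$ be the $n$-ary operation derived from $\circ^*$, namely $F^*(x_1,\ldots,x_n)=x_1\circ^*\cdots\circ^* x_n$. Since $\circ^*$ is associative, $F^*$ is associative as well, so $(X^*,F^*)$ is an $n$-ary semigroup, and it remains only to verify the two conditions of Definition~\ref{de:eAug22}. For the first, given $x_1,\ldots,x_n\in X$, every partial product $x_1\circ^*\cdots\circ^* x_k$ stays inside $X$ and coincides with the corresponding product computed in $(X,\circ)$, so $F^*(x_1,\ldots,x_n)=x_1\circ\cdots\circ x_n=F(x_1,\ldots,x_n)$ by reducibility; hence $F^*|_{X^n}=F$. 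For the second, fix $x\in X^*$ and $k\in\{1,\ldots,n\}$; since $e$ is an identity for $\circ^*$, the product with $k-1$ copies of $e$ before $x$ and $n-k$ copies after collapses to $x$, so $F^*(e^{k-1},x,e^{n-k})=x$, proving that $e$ is neutral for $F^*$.

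I do not expect any genuinely hard step: the whole argument amounts to observing that adjoining a neutral element commutes with taking the binary reduction. The only points needing (minimal) care are the subcases of the associativity check for $\circ^*$ when one or two of the arguments equal $e$, and the remark that partial products of elements of $X$ never escape $X$, which is exactly what makes $F^*$ restrict correctly to $F$.
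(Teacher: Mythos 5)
Your proposal is correct and follows essentially the same route as the paper: adjoin an identity to the binary reduction $(X,\circ)$ in the standard way and then take the $n$-ary extension of the resulting monoid, checking that it restricts to $F$ on $X^n$ and has $e$ as a neutral element. The paper states these verifications more briefly, but the construction and the reasoning are the same.
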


\begin{proof}
Let $(X,F)$ be an $n$-ary semigroup that is reducible to a semigroup $(X,\circ)$ and let $(X^*,\ast)$ be the semigroup obtained from $(X,\circ)$ by adjoining a neutral element $e\notin X$. One can readily see that the $n$-ary structure $(X^*,F^*)$, where the $n$-ary operation $F^*\colon (X^*)^n\to X^*$ is defined by the equation
$$
F^*(x_1,\ldots,x_n) ~=~ x_1\ast\,\cdots\,\ast x_n\qquad\text{for $x_1,\ldots,x_n\in X^*$},
$$
is a (reducible) $n$-ary semigroup with $e$ as a neutral element, whose restriction to $X$ is the $n$-ary semigroup $(X,F)$. 
\end{proof}

\begin{remark}
If an $n$-ary semigroup has a neutral element $e$, then it is reducible by Lemma~\ref{lemma:21-firstL4}. Moreover, by Proposition~\ref{prop:2NeImRe44}, we can always adjoin a neutral element to it. However, the element $e$ need not remain a neutral element for the resulting semigroup.
\end{remark}

The following example demonstrates that it is not possible to adjoin a neutral element to the real ternary semigroup mentioned in Example~\ref{ex:ternary1}.

\begin{example}\label{ex:ternary2}
The irreducible ternary semigroup $(\R,F)$, defined by
$$
F(x_1,x_2,x_3) ~=~ x_1-x_2+x_3\qquad\text{for $x_1,x_2,x_3\in\R$},
$$
does not admit the adjunction of a neutral element. Indeed, suppose on the contrary that we can adjoin a neutral element $e\notin\R$ to it. By Lemma~\ref{lemma:21-firstL4}, the resulting semigroup is then the ternary extension of a unique monoid $(X^*,\ast)$, with $X^*=\R\cup\{e\}$ and neutral element $e$. Now, let $u\in\R$ and define $v=u\ast u$. For any $x\in\R\setminus\{v\}$, we then have
$$
x\ast v ~=~ x\ast u\ast u ~=~ F(x,u,u) ~=~ x ~=~ F(u,u,x) ~=~ u\ast u\ast x ~=~ v\ast x,
$$
and hence
$$
x ~=~ v\ast x\ast v.
$$
If $v\in\R$, then this implies $x=F(v,x,v)=2v-x$, which is impossible.
This proves that $u \ast u = e$, i.e., the square of every element of $X^\ast$ is the neutral element $e$, thus $(X^*,\ast)$ is a group of exponent $2$.
Such groups are commutative, hence 
$$
y ~=~ F(x,x,y) ~=~ x \ast x \ast y ~=~ x \ast y \ast x ~=~ F(x,y,x) ~=~ 2x-y
$$ 
for all $x,y\in\R$, a contradiction.
\end{example}

In the following proposition, we show that, when $n$ is even, any $n$-ary semigroup that admits the adjunction of a neutral element is reducible. We first consider a definition and a lemma.

\begin{definition}
We say that an $n$-ary semigroup $(X,F)$ is an $n$-ary \emph{IN-semigroup} if it is irreducible and admits the adjunction of a neutral element.
\end{definition}

According to Examples~\ref{ex:ternary1} and \ref{ex:ternary2}, the real ternary semigroup $(\R,F)$ with $F(x_1,x_2,x_3) = x_1-x_2+x_3$ is irreducible, but not an IN-semigroup, thereby showing that not all irreducible $n$-ary semigroups are $n$-ary IN-semigroups.

\begin{lemma}\label{lemma:2ExtABe44}
Let $(X,F)$ be an $n$-ary IN-semigroup, let $(X^*,F^*)$ be the $n$-ary semigroup obtained from $(X,F)$ by adjoining a neutral element $e\notin X$, and let $(X^*,\ast)$ be the reduction of $(X^*,F^*)$ whose operation $\ast$ is defined by
$$
x\ast y ~=~ F^*(x,e^{n-2},y)\qquad (x,y\in X).
$$
Then, there exist $a,b\in X$ such that $a\ast b=e$.
\end{lemma}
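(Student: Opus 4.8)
The plan is to argue by contradiction and exploit the fact that, since we are adjoining a \emph{single} new element $e$, failure of the conclusion would force $X$ to be closed under $\ast$. So suppose that $a\ast b\neq e$ for all $a,b\in X$. Because $a\ast b\in X^*=X\cup\{e\}$ and $a\ast b\neq e$, we get $a\ast b\in X$ for all $a,b\in X$; that is, $X$ is closed under the operation $\ast$.

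Next I would invoke Lemma~\ref{lemma:21-firstL4}, applied to the $n$-ary semigroup $(X^*,F^*)$, which has $e$ as a neutral element. That lemma tells us that $(X^*,\ast)$ is a semigroup and, taking $k=n$ in its displayed formula, that $F^*(x_1,\ldots,x_n)=x_1\ast\cdots\ast x_n$ for all $x_1,\ldots,x_n\in X^*$. Restricting to arguments in $X$ and using $F^*|_{X^n}=F$ gives $F(x_1,\ldots,x_n)=x_1\ast\cdots\ast x_n$ for all $x_1,\ldots,x_n\in X$. Since $X$ is closed under $\ast$, the operation $\circ=\ast|_{X^2}\colon X^2\to X$ is well defined, it is associative as a restriction of the associative operation $\ast$, and each iterated product $x_1\ast\cdots\ast x_n$ of elements of $X$ lies in $X$ and coincides with the corresponding left-nested product in $(X,\circ)$. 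Hence $(X,F)$ is reducible to the semigroup $(X,\circ)$.

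This contradicts the irreducibility of $(X,F)$, which holds by hypothesis since $(X,F)$ is an $n$-ary IN-semigroup; therefore there must exist $a,b\in X$ with $a\ast b=e$. The argument is short, and I do not anticipate a genuine obstacle: the only point requiring a little care is verifying that the iterated $\ast$-product of elements of $X$ never leaves $X$, so that it really is a product in $(X,\circ)$ and the reducibility identity in the paper's sense holds; this is immediate from the closure of $X$ under $\ast$ obtained in the first step.
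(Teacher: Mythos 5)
Your proposal is correct and follows essentially the same route as the paper's own proof: assume $X$ is closed under $\ast$, use the reduction $F^*(x_1,\ldots,x_n)=x_1\ast\cdots\ast x_n$ (guaranteed by Lemma~\ref{lemma:21-firstL4}) to conclude $(X,F)$ is reducible to $(X,\ast|_{X^2})$, contradicting irreducibility. No gaps.
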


\begin{proof}
We only need to prove that $X$ cannot be closed under the operation $\ast$. Suppose, on the contrary, that $X$ is closed under $\ast$. Then $(X,\ast)$ is a semigroup and, for any $x_1,\ldots,x_n\in X$, we have
$$
F(x_1,\ldots,x_n) ~=~ F^*(x_1,\ldots,x_n) ~=~ x_1\,\ast\,\cdots\,\ast\, x_n ~\in ~X,
$$
and hence $(X,F)$ is reducible to the semigroup $(X,\ast)$, a contradiction. It follows that there must exist $a,b\in X$ such that $a\ast b\notin X$, i.e., $a\ast b=e$.
\end{proof}

\begin{proposition}\label{prop:2IfPArtOk76}
Let $(X,F)$ be an $n$-ary semigroup, where $n$ is even. If we can adjoin a neutral element to $(X,F)$, then it is reducible.
\end{proposition}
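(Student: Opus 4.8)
The plan is to argue by contradiction, using the parity of $n$ in an essential way. Suppose that a neutral element can be adjoined to $(X,F)$ but that $(X,F)$ is irreducible; then $(X,F)$ is an $n$-ary IN-semigroup, and I adopt the notation of Lemma~\ref{lemma:2ExtABe44}: let $(X^*,F^*)$ be the extension with neutral element $e\notin X$, and let $\ast$ be the binary operation on $X^*$ defined by $x\ast y=F^*(x,e^{n-2},y)$.

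First I would record what Lemma~\ref{lemma:21-firstL4}, applied to $(X^*,F^*)$ (which has $e$ as a neutral element), gives us: $(X^*,\ast)$ is a semigroup with neutral element $e$, and $F^*$ is its $n$-ary reduction, so that $F^*(x_1,\dots,x_n)=x_1\ast\cdots\ast x_n$ for all $x_1,\dots,x_n\in X^*$. Since $F^*|_{X^n}=F$ and $F$ takes its values in $X$, it follows that the $\ast$-product of any $n$ elements of $X$ again lies in $X$.

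Next I would invoke Lemma~\ref{lemma:2ExtABe44} to obtain $a,b\in X$ with $a\ast b=e$, and then use that $n$ is even, say $n=2m$. Forming the $\ast$-product of the $n$ elements $a,b,a,b,\dots,a,b$ of $X$ and regrouping the factors into $m$ consecutive pairs, we get
$$
F(a,b,\dots,a,b)~=~F^*(a,b,\dots,a,b)~=~(a\ast b)\ast\cdots\ast(a\ast b)~=~e\ast\cdots\ast e~=~e.
$$
This is the sought contradiction, since the left-hand side belongs to $X$ while $e\notin X$, and so $(X,F)$ must in fact be reducible.

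I do not anticipate a real obstacle here. The only points requiring a little care are the bookkeeping that identifies $F^*$ with the iterated $\ast$-product on all of $X^*$ (which is precisely the content of Lemma~\ref{lemma:21-firstL4}) and the observation that it is exactly the evenness of $n$ that lets the alternating product $a\ast b\ast a\ast b\ast\cdots$ collapse to $e$. For odd $n$ the corresponding product has the form $a\ast b\ast\cdots\ast a$, which ends with an $a$ and need not leave $X$; this is precisely why one should expect the statement --- and this line of argument --- to break down when $n$ is odd.
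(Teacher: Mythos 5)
Your argument is correct and coincides with the paper's own proof: both proceed by contradiction, use Lemma~\ref{lemma:21-firstL4} to identify $F^*$ with the iterated $\ast$-product, obtain $a,b\in X$ with $a\ast b=e$ from Lemma~\ref{lemma:2ExtABe44}, and derive the contradiction $F(a,b,\ldots,a,b)=e\notin X$ by pairing the $n$ (even) factors. Nothing is missing.
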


\begin{proof}
Suppose on the contrary that $(X,F)$ is an $n$-ary IN-semigroup and let $(X^*,F^*)$ and $(X^*,\ast)$ be the semigroups defined in Lemma~\ref{lemma:2ExtABe44}. Then, there exist $a,b\in X$ such that $a\ast b=e$. We then have \begin{eqnarray*}
e &=& \overbrace{e\ast\,\cdots\,\ast e}^{n/2} ~=~ (a\ast b)\ast\,\cdots\,\ast (a\ast b)\\
&=& F^*(\underbrace{a,b,\ldots,a,b}_{n}) ~=~ F(\underbrace{a,b,\ldots,a,b}_{n}) ~\in X\, ,
\end{eqnarray*}
a contradiction (since $e\notin X$).
\end{proof}

\begin{corollary}\label{cor:even}
If $n$ is even, then a neutral element can be adjoined to an $n$-ary semigroup if and only if it is reducible.
\end{corollary}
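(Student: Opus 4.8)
This corollary is simply the conjunction of the two directions already established, so the plan is to assemble them with a single sentence each. For the "if" direction, I would invoke Proposition~\ref{prop:2NeImRe44}, which asserts that a neutral element can be adjoined to \emph{any} reducible $n$-ary semigroup, with no parity restriction on $n$; hence in particular this holds for even $n$. For the "only if" direction, I would invoke Proposition~\ref{prop:2IfPArtOk76}, which states precisely that for even $n$, any $n$-ary semigroup admitting the adjunction of a neutral element is reducible.

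Combining these two implications yields the stated equivalence for even $n$. I do not anticipate any obstacle here, since both directions are fully proved above; the corollary merely records their combination and highlights the contrast with the odd case treated in the next section.

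Explicitly: let $n$ be even and let $(X,F)$ be an $n$-ary semigroup. If $(X,F)$ is reducible, then by Proposition~\ref{prop:2NeImRe44} a neutral element can be adjoined to it. Conversely, if a neutral element can be adjoined to $(X,F)$, then by Proposition~\ref{prop:2IfPArtOk76} the semigroup $(X,F)$ is reducible. This establishes the equivalence.
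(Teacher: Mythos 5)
Your proposal is correct and matches the paper's intent exactly: the corollary is stated without a separate proof precisely because it is the combination of Proposition~\ref{prop:2NeImRe44} (the ``if'' direction, valid for all $n$) and Proposition~\ref{prop:2IfPArtOk76} (the ``only if'' direction for even $n$). Nothing further is needed.
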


\section{A Characterization of all the $n$-ary IN-Semigroups}

In the following theorem, we provide a characterization of the family of all $n$-ary IN-semigroups, i.e., all the $n$-ary semigroups that, while irreducible, admit the adjunction of a neutral element. We first introduce a special class of monoids.

\begin{definition}\label{de:Wmo55}
We say that a semigroup $(M,\ast)$, endowed with a neutral element $e$, is a \emph{W-monoid} if there exists an element $a\in M$ such that the following three conditions are satisfied:
\begin{itemize}
\item[(W1)] for any $x,y\in M$, we have
    $$
    x\ast y=e\quad\iff\quad \text{($x=a$ and $y=a$)$~$ or $~$($x=e$ and $y=e$)};
    $$
\item[(W2)] for any $x,y\in M$, we have
    $$
    x\ast y=a\quad\iff\quad \text{($x=a$ and $y=e$)$~$ or $~$($x=e$ and $y=a$)};
    $$
\item[(W3)] $a$ is noncentral for $\ast$.
\end{itemize}
Note that condition (W1) ensures the uniqueness of $a$, while condition (W3) implies that $a\neq e$.
\end{definition}

\begin{theorem}\label{thm:3main}
If $(X,F)$ is an $n$-ary IN-semigroup, then $n$ is odd. Adjoining a neutral element $e \notin X$ to $(X,F)$, we obtain an $n$-ary semigroup $(X^*,F^*)$ that is the $n$-ary extension of the W-monoid $(X^*,\ast)$ defined by the equation
$$
x\ast y ~=~ F^*(x,e^{n-2},y)\qquad\text{for $x,y\in X^*$}.
$$
Moreover, $(X^*,\ast)$ is the only such monoid with neutral element $e$.

Conversely, let $(M,\ast)$ be any W-monoid with neutral element $e$, and let us consider its $n$-ary extension $F^*\colon M^n\to M$ defined by
$$
F^*(x_1,\ldots,x_n) ~=~ x_1\ast\,\cdots\,\ast x_n\qquad (x_1,\ldots,x_n\in M).
$$
If $n$ is odd, then the $n$-ary semigroup $(X,F^*|_{X^n})$ with $X=M\setminus\{e\}$ is an $n$-ary IN-semigroup.
\end{theorem}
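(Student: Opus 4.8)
\emph{Forward direction.} The observation driving everything is that, although $X$ is not closed under $\ast$ (Lemma~\ref{lemma:2ExtABe44}), it \emph{is} closed under the $n$-fold $\ast$-product, because $F=F^{*}|_{X^{n}}$ takes values in $X$; this closure is what forces $(X^{*},\ast)$ to be rigid. That $n$ is odd is Corollary~\ref{cor:even}, and Lemma~\ref{lemma:21-firstL4} already gives that $(X^{*},\ast)$ is a monoid with neutral element $e$, that $F^{*}$ is its $n$-ary extension, and that it is the unique reduction with neutral element $e$; so only conditions (W1)--(W3) of Definition~\ref{de:Wmo55} remain. By Lemma~\ref{lemma:2ExtABe44} pick $a,b\in X$ with $a\ast b=e$. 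The key rigidity lemma I would prove first is: \emph{every $\ast$-invertible element $u$ of $X$ satisfies $u^{\ast 2}=e$}. Indeed, if $u$ had infinite $\ast$-order then $e=F(u,\dots,u,(u^{-1})^{\ast(n-1)})$, and if $u$ had finite order $k\geqslant 3$ then $e=F(u^{\ast j_{1}},\dots,u^{\ast j_{n}})$ for suitable $1\leqslant j_{i}\leqslant k-1$ with $k\mid j_{1}+\cdots+j_{n}$ (possible since the integer interval $[\,n,n(k-1)\,]$ contains a multiple of $k$ when $k\geqslant 3$); either identity writes $e$ as an $F$-value, contradicting the closure of $X$. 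A short padding argument (inserting harmless copies of the pair $(a,b)$, since $a\ast b=e$) shows $a\ast x\ast b\in X$ for all $x\in X$, hence $b\ast a=e$; so $a,b$ are units, whence $a^{\ast 2}=b^{\ast 2}=e$ and $a=b$ by cancellation. Thus $a$ is an involution, and the same closure argument shows it is the \emph{only} one in $X$: a second involution $\beta\in X$ with $\beta\neq a$ would make $a\ast\beta\in X$ a unit, hence $(a\ast\beta)^{\ast 2}=e$, so $a$ and $\beta$ would commute and $c:=a\ast\beta$ would be a third involution in $X$ with $F(a,\beta,c,\dots,c)=c^{\ast(n-1)}=e$. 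This gives (W1); for (W2), a hypothetical factorisation $x\ast y=a$ with $x,y\in X$ yields $a\ast x\ast y=e$, and feeding this into (W1) (according as $a\ast x$ is $e$ or lies in $X$) forces $x=e$ or $y=e$, a contradiction. Finally (W3): were $a$ central, $x\diamond y:=a\ast x\ast y$ would be a well-defined (another padding argument) associative operation on $X$ with $x_{1}\diamond\cdots\diamond x_{n}=a^{\ast(n-1)}\ast x_{1}\ast\cdots\ast x_{n}=F(x_{1},\dots,x_{n})$, contradicting the irreducibility of $F$.

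\emph{Converse direction.} Three points. (i) $X$ is closed under the $n$-fold $\ast$-product: if $x_{1}\ast\cdots\ast x_{n}=e$ with all $x_{i}\in X$, then (W1) forces $x_{1}=a$ and $x_{2}\ast\cdots\ast x_{n}=a$, then (W2) forces $x_{2}=a$ and $x_{3}\ast\cdots\ast x_{n}=e$, and since $n$ is odd this induction ends with the absurdity $x_{n}=e$. Hence $F=F^{*}|_{X^{n}}$ is a total associative $n$-ary operation on $X=M\setminus\{e\}$. (ii) Re-adjoining the point $e$ produces the $n$-ary semigroup $(M,F^{*})$ in which $e$ is clearly neutral, so $(X,F)$ admits the adjunction of a neutral element. (iii) $F$ is irreducible. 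Suppose it reduced to an associative operation $\diamond$ on $X$. Since $a^{\ast 2}=e$ and $n$ is odd, $a^{\ast(n-1)}=e$, so $F(a,\dots,a,x)=x=F(x,a,\dots,a)$, making $c:=a^{\diamond(n-1)}$ the $\diamond$-neutral element. Padding with copies of $c$ gives $x\diamond y=F(c,\dots,c,x,y)=C\ast x\ast y$ (and symmetrically $x\diamond y=x\ast y\ast C$) with $C:=c^{\ast(n-2)}$, and in particular $C=a\diamond a\in X$. From $c\diamond x=x$ one gets $c^{\ast(n-1)}=e$, so $C=c^{-1}$ is a $\ast$-unit; from the associativity of $\diamond$, $C^{\ast 2}\ast x\ast y\ast z=C\ast x\ast C\ast y\ast z$, and cancelling $C$ then setting $y=z=a$ gives $C\ast x=x\ast C$ for all $x\in X$, so $C$ is central. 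Also $\diamond$ being total on $X$ forces $C\ast x\ast y\neq e$, i.e.\ $c=C^{-1}$ is never a $\ast$-product of two elements of $X$. Now, if $C^{\ast 2}=e$ then (W1) gives $C=a$, and the centrality of $C$ contradicts (W3); and if $C^{\ast 2}\neq e$ then $n\geqslant 5$ (for $n=3$ one would have $C=c^{-1}=c$, forcing $C^{\ast 2}=e$), and the $\ast$-order $d\geqslant 3$ of $C$ divides $n-1$ but not $n-3$, so $C^{\ast(n-3)}\in X$ and $c=C^{\ast(n-2)}=C\ast C^{\ast(n-3)}$ exhibits $c$ as a product of two elements of $X$, again a contradiction. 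In either case $F$ is irreducible.

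\emph{Where the difficulty lies.} The associativity and padding computations are routine; the real work is, in the forward direction, to upgrade the bare closure of $X$ under the $n$-ary operation into the full rigidity of (W1)--(W2) (via the \emph{unit $\Rightarrow$ involution} lemma and the uniqueness of the involution in $X$), and, on the converse side, to show that any binary reduction $\diamond$ of $F$ must be $\ast$ twisted by a central $\ast$-unit $C$, which then collapses the situation either to $C=a$ being central or to the forbidden factorisation $c=C\ast C^{\ast(n-3)}$.
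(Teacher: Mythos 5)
Your argument is correct, but it is organized quite differently from the paper's, so a comparison is worthwhile. In the forward direction the paper's single workhorse is Claim~\ref{cl:as7fd6s8a} (no product of three elements of $X$ equals $e$, proved by padding with $k-1$ copies of the pair $(a,b)$), from which (W1) and (W2) fall out by short manipulations, and (W3) follows because a central $a$ would be neutral for $F$ and hence force reducibility via Lemma~\ref{lemma:21-firstL4}. You instead exploit the closure of $X$ under $n$-fold $\ast$-products through a rigidity lemma (every $\ast$-unit of $X$ is an involution, proved by an order-counting argument) together with ad hoc padding identities, and for (W3) you exhibit the reduction $x\diamond y=a\ast x\ast y$ explicitly; both routes work, and your lemma is in fact a stronger structural statement than what the paper extracts. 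One step you should make explicit: ``this gives (W1)'' requires rerunning the $a,b$-argument for an arbitrary factorization $e=x\ast y$ with $x,y\in X$ (pad to get $y\ast x=e$, apply the rigidity lemma to get $x^{\ast 2}=y^{\ast 2}=e$, hence $x=y$ is an involution and so $x=a$); the argument is verbatim the same, but uniqueness of the involution alone does not formally yield (W1) as written. In the converse direction your closure proof (alternating applications of (W1) and (W2), terminating because $n$ is odd) is a neat alternative to the paper's parity argument in Claim~\ref{cl:ae}, while your irreducibility proof is substantially heavier than the paper's: the paper simply shows $(X,F)$ has no neutral element (Claim~\ref{cl:NoNeut}) and notes that any reduction $(X,\circ)$ would make $a^{\circ(n-1)}$ neutral; you instead prove that any reduction must have the form $x\diamond y=C\ast x\ast y$ with $C=c^{\ast(n-2)}$ a central $\ast$-unit, and then dispose of the cases $C^{\ast 2}=e$ (then $C=a$ is central, contradicting (W3)) and $C^{\ast 2}\neq e$ (then $c=C\ast C^{\ast(n-3)}$ is a forbidden factorization). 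This is correct and even yields extra information---a classification of candidate reductions---but the paper's detour through the absence of a neutral element is considerably shorter.
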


\begin{proof}
Let us prove the first part of the theorem. Let $(X,F)$ be an $n$-ary IN-semigroup, i.e., $(X,F)$ is irreducible and admits the adjunction of a neutral element $e\notin X$. By Proposition~\ref{prop:2IfPArtOk76}, $n$ must be odd, i.e., $n=2k+1$ for some integer $k\geqslant 1$. By Lemma~\ref{lemma:21-firstL4}, the $n$-ary semigroup $(X^*,F^*)$ obtained from $(X,F)$ by adjoining a neutral element $e\notin X$ is reducible to the binary semigroup $(X^*,\ast)$ defined by
$$
x\ast y ~=~ F^*(x,e^{n-2},y),
$$
and $(X^*,\ast)$ is the unique reduction that has $e$ as its neutral element.

Now, we only need to prove that $(X^*,\ast)$ is a W-monoid. By Lemma~\ref{lemma:2ExtABe44}, there exist $a,b\in X$ such that $a\ast b=e$. The following claim states that $e$ cannot be expressed as a composition of three elements of $X$.

\begin{claim}\label{cl:as7fd6s8a}
For any $x,y,z\in X^*$ such that $x\ast y\ast z=e$, we have $e\in\{x,y,z\}$.
\end{claim}

{
\renewcommand{\qedsymbol}{$\lozenge$}
\begin{proof}[Proof of Claim~\ref{cl:as7fd6s8a}]
Suppose on the contrary that $x\ast y\ast z=e$ for some $x,y,z\in X$. Since $e$ is a neutral element for $\ast$, we have
\begin{eqnarray*}
F(x,y,z,a,b,\ldots,a,b) &=& F^*(x,y,z,a,b,\ldots,a,b)\\
&=& (x\ast y\ast z) \ast \underbrace{(a\ast b) \ast\,\cdots\,\ast (a\ast b)}_{\text{$k-1$ pairs}} ~=~ e.
\end{eqnarray*}
This is a contradiction since $F$ ranges in $X$ and $e\notin X$.
\end{proof}
}

Let us now show that condition (W1) of Definition~\ref{de:Wmo55} holds. By Claim~\ref{cl:as7fd6s8a}, we see that
$$
e ~=~ (a\ast b)\ast (a\ast b) ~=~ a\ast (b\ast a)\ast b
$$
implies that $b\ast a=e$, since $a\neq e\neq b$. From this, it follows that
$$
a\ast(b\ast b)\ast a ~=~ (a\ast b)\ast (b\ast a) ~=~ e ~=~ (b\ast a)\ast (a\ast b) ~=~ b\ast(a\ast a)\ast b.
$$
Applying Claim~\ref{cl:as7fd6s8a} again, we obtain $a\ast a=b\ast b=e$. We then see that $b=a$, since
$$
b ~=~ e\ast b ~=~ (a\ast b)\ast b  ~=~ a\ast (b\ast b) ~=~ a\ast e ~=~ a.
$$
Now, if $e$ has another factorization $e=c\ast d$ with $c,d\in X$, then we can derive similarly that $c=d$. Moreover, we have
$$
e ~=~ (a\ast a)\ast (c\ast c) ~=~ a\ast (a\ast c)\ast c,
$$
and then $a\ast c=e$ by Claim~\ref{cl:as7fd6s8a}, and hence $a=c$ by the preceding argument. This shows that the only nontrivial factorization of $e$ is $e=a\ast a$, thus proving condition (W1).

Let us now show that condition (W2) holds. If $a=x\ast y$ for some $x,y\in X^*$, then $e=a\ast a=(x\ast y)\ast a$, and hence $e\in\{x,y\}$ by Claim~\ref{cl:as7fd6s8a}, which also implies that $\{x,y\}=\{a,e\}$, as $e$ is the neutral element for $\ast$.

To verify that condition (W3) also holds, suppose for contradiction that $a$ is a central element for $\ast$. Then, for any $x\in X$ and any $k\in\{1,\ldots,n\}$, we have
$$
F(a^{k-1},x,a^{n-k}) ~=~ F^*(a^{k-1},x,a^{n-k}) ~=~ a^{k-1}\ast x\ast a^{n-k} ~=~ x\ast \underbrace{a\ast\,\cdots\,\ast a}_{\text{$n-1$ (even)}} ~=~ x,
$$
which shows that $a$ is a neutral element for $F$. This is a contradiction, since $F$ is irreducible.

Let us now prove the second part of the theorem. Let $(M,\ast)$ be a W-monoid with neutral element $e\in M$, and let $a\in M\setminus\{e\}$ be as defined in Definition~\ref{de:Wmo55}. Let also $n$ be an odd integer, and let $(M,F^*)$ be the $n$-ary extension of $(M,\ast)$.

\begin{claim}\label{cl:ae}
For any $x_1,\ldots,x_n\in M$, we have $x_1\ast\cdots\ast x_n\in \{a,e\}$ if and only if $x_1,\ldots,x_n\in \{a,e\}$. Moreover, in this case we have
$$
x_1\ast\,\cdots\,\ast x_n ~=~
\begin{cases}
a, & \text{if $\abs{\{i:x_i=a\}}$ is odd},\\
e, & \text{if $\abs{\{i:x_i=a\}}$ is even}.
\end{cases}
$$
\end{claim}

{
\renewcommand{\qedsymbol}{$\lozenge$}
\begin{proof}[Proof of Claim~\ref{cl:ae}]
By conditions (W1) and (W2), we have $x_1\ast x_2\in \{a,e\}$ if and only if $x_1,x_2\in \{a,e\}$, and then a routine induction argument shows that
$$
x_1\ast\,\cdots\,\ast x_n\in \{a,e\}\quad\iff\quad x_1,\ldots,x_n\in \{a,e\}.
$$
The second statement of the claim follows from the fact that the subsemigroup $(\{a,e\},\ast)$ is isomorphic to $(\Z_2,+)$ under the isomorphism $e \mapsto 0$, $a \mapsto 1$.
\end{proof}
}

Set $X=M\setminus\{e\}$, and let us show that $X$ is closed under $F^\ast$.

\begin{claim}\label{cl:WellDef53}
For any $x_1,\ldots,x_n\in X$, we have $F^\ast(x_1,\ldots,x_n)\in X$.
\end{claim}

{
\renewcommand{\qedsymbol}{$\lozenge$}
\begin{proof}[Proof of Claim~\ref{cl:WellDef53}]
Suppose on the contrary that there exist $x_1,\ldots,x_n\in X$ such that $F^\ast(x_1,\ldots,x_n)=e$.
By Claim~\ref{cl:ae}, we have $x_1,\ldots,x_n\in \{a,e\}$ and $\abs{\{i:x_i=a\}}$ is even.
Since $n$ is odd, this implies that some $x_i$ equals $e$, contrary to our assumption.
\end{proof}
}

Now we can define the restriction $F=F^*|_{X^n}$, and we will show that the $n$-ary semigroup $(X,F)$ is an $n$-ary IN-semigroup. 
It is clear by definition that it admits the adjunction of the neutral element $e$. To see that it is irreducible, we first establish a claim.

\begin{claim}\label{cl:NoNeut}
The $n$-ary semigroup $(X,F)$ has no neutral element.
\end{claim}

{
\renewcommand{\qedsymbol}{$\lozenge$}
\begin{proof}[Proof of Claim~\ref{cl:NoNeut}]
Suppose on the contrary that $(X,F)$ has a neutral element $u\in X$. We then have $F(u^{n-1},a)=a$, that is,
$$
\underbrace{u\ast\,\cdots\,\ast u}_{n-1}\ast\, a ~=~ a,\qquad\text{and hence}\qquad \underbrace{u\ast\,\cdots\,\ast u}_{n-1} ~=~ e.
$$
It follows by Claim~\ref{cl:ae} that $u=a$ (since $u\neq e$), which means that $a$ is a neutral element for $F$. But then, for any $x\in X$, we have (by Claim~\ref{cl:ae}, using that $n-2$ is odd)
$$
x ~=~ F(a,x,a^{n-2}) ~=~ F^*(a,x,a^{n-2}) ~=~ a\ast x\ast\underbrace{a\ast\,\cdots\,\ast a}_{n-2} ~=~ a\ast x\ast a,
$$
which, upon multiplying by $a$ on the left, implies that $a$ is a central element for $\ast$, a contradiction.
\end{proof}
}

Let us now show that the $n$-ary semigroup $(X,F)$ is irreducible. Suppose, on the contrary, that $(X,F)$ is reducible to a semigroup $(X,\circ)$. Then, for any $x\in X$, we have
\begin{eqnarray*}
\overbrace{a\circ\,\cdots\,\circ a}^{n-1} \circ\, x &=& F(a^{n-1},x) ~=~ F^*(a^{n-1},x)\\
&=& \underbrace{a\ast\,\cdots\,\ast a}_{n-1}\ast\, x ~=~ e\ast x ~=~ x,
\end{eqnarray*}
and similarly,
$$
x\circ \, \underbrace{a\circ\,\cdots\,\circ a}_{n-1} ~=~ x\ast\, \underbrace{a\ast\,\cdots\,\ast a}_{n-1} ~=~ x.
$$
Therefore, $\overbrace{a\circ\,\cdots\,\circ a}^{n-1}$ is a neutral element for $(X,\circ)$ and hence also for $(X,F)$. This contradicts Claim~\ref{cl:NoNeut} and therefore completes the proof of the theorem.
\end{proof}



In the next section, we present examples of W-monoids which, in view of Theorem~\ref{thm:3main}, establish the existence of $n$-ary IN-semigroups for odd $n$. This observation enables us to refute the ‘only if’ direction of Dudek and Mukhin’s statement in \cite[Proposition~2]{DudMuk06} for odd $n$. In fact, the flaw in their argument lies only in their proof of this implication. In our notation, they erroneously assert that the existence of elements $a,b\in X$ such that $a\ast b=e$ (see Lemma~\ref{lemma:2ExtABe44}) contradicts the assumption that $e\notin X$.

\section{Constructions of W-monoids}

In this final section, we outline a procedure for constructing W-monoids via specific ideal extensions of semigroups. We begin by recalling some fundamental definitions. For background, the reader may consult, e.g., Clifford and Preston~\cite{CliPre61}, Petrich~\cite{Pet73}, and Rees~\cite{Ree40}.

Given a semigroup $(S,\ast)$, a subset $I\subset S$ is called an \emph{ideal} of $(S,\ast)$ if, for all $i\in I$ and $x\in S$, we have $i\ast x\in I$ and $x\ast i\in I$. The \emph{Rees congruence} associated with $I$ is the relation $\sim$ on $S$ defined by $x\sim y$ if and only if $x,y\in I$ or $x=y$. The corresponding quotient $S/I$ is referred to as the \emph{Rees quotient}.

An \emph{ideal extension} of a semigroup $S$ by a semigroup $T$ with a zero element is a semigroup $\Sigma$ that contains $S$ as an ideal and for which the Rees quotient $\Sigma /S$ is isomorphic to $T$.

Using these concepts, we can easily derive the following proposition, which describes the monoids satisfying conditions (W1) and (W2) of Definition~\ref{de:Wmo55} in terms of Rees quotients and ideal extensions. The proof is straightforward and is therefore omitted.

\begin{proposition}\label{prop:3MonRees}
Let $(M,\ast)$ be a monoid with neutral element $e$ and let $a\in M\setminus\{e\}$. Then the following assertions are equivalent.
\begin{itemize}
\item[(i)] The monoid $(M,\ast)$ satisfies conditions (W1) and (W2) of Definition~\ref{de:Wmo55}.
\item[(ii)] The subset $I=M\setminus\{a,e\}$ is an ideal of $(M,\ast)$ and the Rees quotient $M/I$ is isomorphic to the set $T=\{-1,0,1\}$ endowed with the usual multiplication, where the equivalence class $[a]$ corresponds to $-1$.
\item[(iii)] $(M,\ast)$ is an ideal extension of a semigroup $S$ by the semigroup $T=\{-1,0,1\}$ endowed with the usual multiplication.
\end{itemize}
\end{proposition}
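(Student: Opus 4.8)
The plan is to establish the equivalence (i)$\iff$(ii) directly from Definition~\ref{de:Wmo55}, and then to observe that (ii)$\iff$(iii) is little more than a restatement of the notion of ideal extension. Throughout, write $I=M\setminus\{a,e\}$; since $a\neq e$, the Rees congruence associated with $I$ has $\{e\}$ and $\{a\}$ as one-element classes, so $M/I$ has underlying set $\{[e],[a],[I]\}$ whenever $I$ is an ideal (with $[I]$ replaced by a freshly adjoined zero in the degenerate case $I=\emptyset$).

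First I would prove (i)$\Rightarrow$(ii). To see that $I$ is an ideal of $(M,\ast)$, suppose $i\in I$, $x\in M$, and $i\ast x\in\{a,e\}$; then (W1) (if $i\ast x=e$) or (W2) (if $i\ast x=a$) forces $i\in\{a,e\}$, contradicting $i\in I$, and likewise for $x\ast i$. Thus $M/I=\{[e],[a],[I]\}$ with $[I]$ its zero; moreover (W1) with $x=y=a$ gives $a\ast a=e$, hence $[a]^2=[e]$, while $[e]$ is the identity of $M/I$. Consequently the map $[e]\mapsto 1$, $[a]\mapsto -1$, $[I]\mapsto 0$ is an isomorphism from $M/I$ onto $(\{-1,0,1\},\times)$ sending $[a]$ to $-1$, which is exactly (ii).

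Next, (ii)$\Rightarrow$(i). Note first that $e\notin I$, for otherwise $M=e\ast M\subseteq I$ (as $I$ is an ideal) would force $I=M$ and a one-element Rees quotient, not isomorphic to $T$. Now fix $x,y\in M$. If $x\ast y\in\{a,e\}$, then $[x\ast y]$ is not the zero of $M/I$, so neither $[x]$ nor $[y]$ is, i.e., $x,y\in\{a,e\}$; conversely every such pair yields $x\ast y\in\{a,e\}$. It then remains only to read off the multiplication of the subgroup $\{1,-1\}$ of $T$ --- equivalently, $a\ast a=e$, $a\ast e=e\ast a=a$, $e\ast e=e$ --- which shows that among $x,y\in\{a,e\}$, and hence among all $x,y\in M$, one has $x\ast y=e$ exactly for $(x,y)\in\{(e,e),(a,a)\}$ and $x\ast y=a$ exactly for $(x,y)\in\{(a,e),(e,a)\}$; these are precisely (W1) and (W2).

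Finally, (ii)$\Rightarrow$(iii) is immediate upon taking $S=I$. For the converse, if $(M,\ast)$ is an ideal extension of a semigroup $S$ by $T$, then again $e\notin S$ (else $M=e\ast M\subseteq S$ makes $S=M$ and the quotient trivial), so the Rees projection carries $e$ onto the identity $1$ of $M/S\cong T$; hence $M\setminus S$ consists of $e$ and the unique preimage of $-1$, which is the distinguished element $a$, so $S=M\setminus\{a,e\}=I$ and $M/I\cong T$. I do not expect a genuine obstacle anywhere --- the statement is routine --- the only points worth a moment's care being the degenerate case $\abs{M}=2$ (then $I=\emptyset$, and one uses the standard convention that the Rees quotient by the empty ideal adjoins a zero, allowing correspondingly the empty semigroup $S$ in (iii)) and, in (iii)$\Rightarrow$(ii), the bookkeeping that identifies the given element $a$ with the preimage of $-1$ under the isomorphism $M/S\cong T$.
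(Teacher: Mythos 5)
The paper gives no proof of this proposition (it is declared straightforward and omitted), so there is nothing to compare against beyond correctness; your argument is correct and supplies exactly the routine verification the authors had in mind: (i)$\Rightarrow$(ii) via the contrapositive use of (W1)--(W2) to show $I$ absorbs, then reading off the three-element multiplication table; (ii)$\Rightarrow$(i) by pulling the table back through the quotient (your preliminary remark that $e\notin I$ is vacuous here, since $I=M\setminus\{a,e\}$ by definition, but harmless); and (ii)$\Leftrightarrow$(iii) by taking $S=I$, respectively recovering $I$ from $S$. Two small points deserve the care you gave them. First, the degenerate case $M=\{e,a\}\cong\Z_2$ does satisfy (W1)--(W2) with $I=\emptyset$, so strictly speaking (ii) and (iii) only match (i) there under a convention about the empty ideal; the paper (which never needs this case, since (W3) excludes it for W-monoids) silently ignores it, and your explicit flag is if anything more careful than the source. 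Second, in (iii)$\Rightarrow$(ii) your phrase ``the unique preimage of $-1$, which is the distinguished element $a$'' is an identification, not a deduction: as literally written, (iii) makes no reference to the fixed element $a$, so the implication toward (ii) for that particular $a$ cannot be proved and must instead be read as part of the intended meaning of (iii) (the paper itself adopts this reading immediately afterwards, writing $T=\{a,0,e\}$). Since you explicitly label this as bookkeeping tied to the isomorphism $M/S\cong T$, I would not call it a gap in your proof, but it is worth stating outright that one is interpreting (iii) with $a$ designated as the preimage of $-1$ rather than deriving that fact.
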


According to Proposition~\ref{prop:3MonRees}, to establish the existence of W-monoids, it suffices to construct an ideal extension $(M,\ast)$ of a semigroup $S$ by the semigroup $T=\{a,0,e\}\cong\{-1,0,1\}$, in which the element $a$ is noncentral for $\ast$. To this end, the following theorem adapts a result due to Clifford~\cite{Cli50} (see also Clifford and Preston~\cite[Theorem 4.19]{CliPre61}). For simplicity, we denote the operations in both $T$ and $S$ by concatenation.

\begin{theorem}\label{thm:42CliffPres419}
Any semigroup homomorphism $h\,\colon {T^* =\{-1,1\}\cong\Z_2}\to S$ defines an ideal extension $(M,\ast)$ of $S$ by $T=\{-1,0,1\}$, where $M=S\cup T^*$ and the operation $\ast$ is defined by
$$
x\ast y ~=~
\begin{cases}
x{\,}y, & \text{if $x,y\in T^*$},\\
h(x){\,}y, & \text{if $x\in T^*$ and $y\in S$},\\
x{\,}h(y), & \text{if $x\in S$ and $y\in T^*$},\\
x{\,}y, & \text{if $x,y\in S$}.
\end{cases}
$$
Moreover, if $S$ has a neutral element, then every ideal extension of $S$ by $T$ arises in this manner.
\end{theorem}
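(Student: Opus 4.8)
The plan is to prove the two assertions separately: first that the displayed formula makes $M=S\cup T^*$ (a disjoint union) into a semigroup in which $S$ is an ideal with $M/S\cong T$, and then that, when $S$ is a monoid, this construction accounts for every ideal extension of $S$ by $T$.

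For the first assertion the only substantive point is associativity of $\ast$, which I would verify by splitting into the eight cases determined by whether each of $x,y,z$ lies in $S$ or in $T^*$. When all three lie in $S$, respectively all three in $T^*$, the identity $(x\ast y)\ast z=x\ast(y\ast z)$ is inherited from associativity of $S$, respectively of $\Z_2$. In the five mixed cases one simply moves each occurrence of $h$ to the relevant end and invokes associativity of $S$; a short check shows that the fact that $h$ is a \emph{homomorphism} (rather than an arbitrary map) is used in exactly two of them, namely the patterns $x,y\in T^*$, $z\in S$ and $x\in S$, $y,z\in T^*$, where the two sides differ by replacing $h(xy)$ with $h(x)h(y)$, respectively $h(yz)$ with $h(y)h(z)$. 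Once $\ast$ is associative, the formula shows at once that $x\ast y\in S$ whenever $x\in S$ or $y\in S$, so $S$ is an ideal of $M$; and the Rees quotient $M/S$ has underlying set $\{[S]\}\cup T^*$ with $[S]$ a zero and $[x]\ast[y]=[xy]$ for $x,y\in T^*$, which is exactly $T=\{-1,0,1\}$ under $[S]\mapsto 0$, $[\pm 1]\mapsto\pm 1$. Hence $(M,\ast)$ is an ideal extension of $S$ by $T$.

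For the converse, let $\Sigma$ be an ideal extension of $S$ by $T$ and let $f$ denote the neutral element of $S$. Since $\Sigma/S\cong T$ has three elements, one of which is the zero $[S]$, the complement $\Sigma\setminus S$ has exactly two elements; write $q$ for the one whose class is the identity of $\Sigma/S$ (necessarily one of these two singletons, as $T$ has an identity different from its zero) and $p$ for the other. As $q\ast q$, $q\ast p$, $p\ast q$, $p\ast p$ all lie outside $S$, they are pinned down by their classes, giving $q\ast q=q$, $q\ast p=p\ast q=p$, $p\ast p=q$. I would then define $h\colon T^*\to S$ by $h(1)=q\ast f$ and $h(-1)=p\ast f$, identifying $1\leftrightarrow q$ and $-1\leftrightarrow p$. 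The first key step is $q\ast f=f\ast q$ (and likewise $p\ast f=f\ast p$): from $f\ast(q\ast f)=(f\ast q)\ast f$, and since $q\ast f,\ f\ast q\in S$, the left-hand side collapses to $q\ast f$ and the right-hand side to $f\ast q$ because $f$ is a two-sided identity of $S$. Using this, for every $s\in S$ one obtains $q\ast s=q\ast(f\ast s)=(q\ast f)\ast s=h(1)\,s$ and $s\ast q=(s\ast f)\ast q=s\ast(f\ast q)=s\,h(1)$, and the analogous formulas for $p$; together with the products of $q,p$ among themselves, this shows that under the identification $q\leftrightarrow 1$, $p\leftrightarrow -1$ the operation of $\Sigma$ coincides with the one produced by the construction applied to $h$. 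It then remains to confirm that $h$ is a semigroup homomorphism, i.e.\ $h(1)^2=h(1)$, $h(1)h(-1)=h(-1)h(1)=h(-1)$, $h(-1)^2=h(1)$; each of these follows by moving the factor $f$ around in the products using $f\ast q=q\ast f$ and $f\ast p=p\ast f$ and then invoking $q\ast q=q$, $q\ast p=p\ast q=p$, $p\ast p=q$ --- for instance $h(1)^2=(q\ast f)\ast(q\ast f)=(q\ast f)\ast(f\ast q)=q\ast(f\ast f)\ast q=q\ast f\ast q=q\ast(q\ast f)=(q\ast q)\ast f=q\ast f=h(1)$.

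I expect the main obstacle to be the converse direction, and within it the passage from ``$q$ and $p$ are the elements of $\Sigma$ lying over the units of $T$'' to ``$q$ and $p$ act on the ideal $S$ by left and right multiplication by fixed elements of $S$''. This is precisely where the hypothesis that $S$ has a neutral element is indispensable: without it an adjoined element could induce a translation of $S$ that is not realised inside $S$, and no such $h$ would exist. By contrast, the eight-case computation in the first assertion is routine, as long as it is arranged so that the place where $h$ being a homomorphism is used remains explicit.
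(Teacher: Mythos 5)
Your proposal is correct, but it is worth noting that the paper does not actually prove this statement at all: Theorem~\ref{thm:42CliffPres419} is presented as an adaptation of Clifford's extension theorem (Clifford--Preston, Theorem~4.19), so the burden of proof is delegated to the literature. What you supply is, in effect, a self-contained reconstruction of that classical argument specialized to $T=\{-1,0,1\}$. Your direct part is the routine eight-case associativity check, and you correctly isolate the two patterns ($x,y\in T^*,\ z\in S$ and $x\in S,\ y,z\in T^*$) where the homomorphism property $h(xy)=h(x)h(y)$ is genuinely needed; the ideal property and the identification of the Rees quotient with $T$ are immediate from the defining formula. Your converse is exactly the mechanism behind Clifford's theorem: with $f$ the identity of $S$, the two elements $q,p$ of $\Sigma\setminus S$ lying over $1,-1$ multiply among themselves as $\Z_2$ (pinned down by their singleton Rees classes), the identity $q\ast f=f\ast q$ (and likewise for $p$) forces $q$ and $p$ to act on $S$ by two-sided multiplication by the fixed elements $h(1)=q\ast f$, $h(-1)=p\ast f$, and the homomorphism identities for $h$ follow by shuffling $f$ through the products. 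This matches the role the hypothesis ``$S$ has a neutral element'' plays in the general theory: without it the adjoined elements only induce bitranslations of $S$ (which is precisely what the paper's Theorem~\ref{thm:44-BasPetY} exploits, via Yoshida, to capture all W-monoids), whereas with an identity those bitranslations are inner and are realized by elements of $S$, which is your map $h$. So your argument is sound and complete; the only difference from the paper is that the paper cites this fact while you prove it from scratch, in a style close to the paper's own ``self-contained proof'' of Theorem~\ref{thm:44-BasPetY}.
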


As stated, Theorem~\ref{thm:42CliffPres419} provides a straightforward method for constructing a wide range of W-monoids. The following example illustrates a particularly simple instance of such monoids.

\begin{example}\label{ex:Order2}
Let $(S,\circ)$ be a monoid with a neutral element $\id$, and let $A\in S$ be an element satisfying $A\circ A=\id$. It is straightforward to verify that the map $h\,\colon T^* =\{-1,1\}\to S$ defined by $h(1)=\id$ and $h(-1)=A$ is a semigroup homomorphism. By Theorem~\ref{thm:42CliffPres419}, this induces an ideal extension $(M,\ast)$ of $S$ by $T$, where $M=S\cup T^*$. Moreover, if $A$ is noncentral in $S$, then it remains noncentral in $M$, and consequently, $(M,\ast)$ is a W-monoid. As a concrete example, one can consider the general linear group $S=\mathrm{GL}_2(\R)$ of all invertible $2\times 2$ matrices over $\R$ and the involutive matrix $A=\mathrm{diag}(1,-1)$.
\end{example}

\begin{remark}\label{rem:Order2}
If the semigroup $(M,\ast)$ constructed by Theorem~\ref{thm:42CliffPres419} is a monoid, then it is not difficult to see that $h(1)$ must be a neutral element for $S$.
Conversely, according to the last statement of the theorem, all W-monoids $(M,\ast)$ such that $S=M\setminus\{a,e\}$ has a neutral element are necessarily constructed along the lines of Example~\ref{ex:Order2}.
\end{remark}

We now observe that a result by Yoshida~\cite{Yos65} (see also Petrich~\cite[Theorem~III.2.2]{Pet73}) offers a description of all ideal extensions, based on the concept of the translational hull. To proceed, we first recall some fundamental concepts related to semigroups (see \cite[p.~63]{Pet73}). Given a semigroup $(S,\circ)$, 
\begin{itemize}
\item a \emph{left translation} of $S$ is a map $\lambda\colon S\to S$ satisfying
$$
\lambda(x\circ y) ~=~ \lambda(x)\circ y\qquad (x,y\in S);
$$
\item a \emph{right translation} of $S$ is a map $\rho\colon S\to S$ satisfying
$$
\rho(x\circ y) ~=~ x\circ \rho(y)\qquad (x,y\in S);
$$
\item a \emph{bitranslation} of $S$ is a pair $(\lambda,\rho)$ such that
$$
x\circ \lambda(y) ~=~ \rho(x)\circ y\qquad (x,y\in S),
$$
where $\lambda$ is a left translation and $\rho$ is a right translation of $S$;
\item the \emph{translational hull} of $S$ is the semigroup $\Omega(S)$ of all the bitranslations.
\end{itemize}
Associativity implies that the pair $(\lambda,\rho)$ with $\lambda(y) = a \circ y$ and $\rho(x) = x \circ a$ is a bitranslation for any given $a \in S$. 

Applying Yoshida’s result, we obtain the following characterization of the class of W-monoids.

\begin{theorem}\label{thm:44-BasPetY}
Let $(S,\circ)$ be a semigroup and consider a bitranslation $(L,R)$ of $S$ satisfying $L^2=R^2=\id$, $LR=RL$, and $L\neq R$. Then, the structure $(M,\ast)$, where $M=S\cup\{a,e\}$ for some $a,e\notin S$, with $e$ neutral for $\ast$, and
$$
x\ast y ~=~
\begin{cases}
x\circ y, & \text{if $x,y\in S$},\\
L(y), & \text{if $x=a$ and $y\in S$},\\
R(x), & \text{if $x\in S$ and $y=a$},\\
e, & \text{if $x=a$ and $y=a$}.
\end{cases}
$$
is a W-monoid. Moreover, all W-monoids can be constructed in this manner.
\end{theorem}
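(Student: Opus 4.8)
The plan is to show that the construction in Theorem~\ref{thm:44-BasPetY} always yields a W-monoid, and then to show conversely that every W-monoid arises this way. For the forward direction, I would first verify that $(M,\ast)$ as defined is associative. This is a case analysis on how many of the three arguments lie in $S$, in $\{a\}$, or equal $e$. The cases involving $e$ are trivial since $e$ is stipulated neutral; the case where all three lie in $S$ is just associativity of $\circ$; the remaining cases are exactly where the hypotheses on $(L,R)$ are used. For instance, $(a\ast x)\ast y$ versus $a\ast(x\ast y)$ for $x,y\in S$ reduces to $L(x)\circ y = L(x\circ y)$, which holds because $L$ is a left translation; the case $(x\ast a)\ast y = x\ast(a\ast y)$ for $x,y\in S$ becomes $R(x)\circ y = x\circ L(y)$, which is precisely the bitranslation identity; the case $x\ast(a\ast a)$ versus $(x\ast a)\ast a$ for $x\in S$ becomes $x\circ e$-type bookkeeping against $R(R(x)) = R^2(x) = x$, using $R^2=\id$; and the case $a\ast a\ast a$ gives $a\ast e = a$ on one side and $e\ast a = a$ on the other, consistent. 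The identities $L^2=R^2=\id$, $LR=RL$ get used exactly in the subcases with two $a$'s among the arguments. Once associativity is established, checking (W1) and (W2) is immediate from the defining formula (the product of two elements equals $e$ only when both are $a$ or both are $e$; it equals $a$ only in the mixed $\{a,e\}$ cases — note $L(y),R(x)\in S$ for $x,y\in S$, so a product with exactly one factor in $S$ never lands in $\{a,e\}$), and (W3), the noncentrality of $a$, follows from $L\neq R$: there is some $x\in S$ with $L(x)\neq R(x)$, i.e. $a\ast x\neq x\ast a$.

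For the converse, I would start from an arbitrary W-monoid $(M,\ast)$ with neutral element $e$ and distinguished element $a$. By Proposition~\ref{prop:3MonRees}, conditions (W1) and (W2) already tell us that $S := M\setminus\{a,e\}$ is an ideal and that $M$ is an ideal extension of $S$ by $T=\{-1,0,1\}$. The point is to extract the bitranslation data. Define $L,R\colon S\to S$ by $L(x) = a\ast x$ and $R(x) = x\ast a$; these are well-defined into $S$ because $S$ is an ideal (so $a\ast x, x\ast a\in S\cup\{\text{nothing in }\{a,e\}\}$ — and indeed by (W1),(W2) they cannot be $a$ or $e$ when $x\in S$). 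Associativity of $\ast$ then gives: $L(x\ast y) = a\ast(x\ast y) = (a\ast x)\ast y = L(x)\ast y = L(x)\circ y$, so $L$ is a left translation of $(S,\circ)$; symmetrically $R$ is a right translation; and $R(x)\circ y = (x\ast a)\ast y = x\ast(a\ast y) = x\circ L(y)$, so $(L,R)$ is a bitranslation. Next, $L^2(x) = a\ast a\ast x = e\ast x = x$ by (W1), so $L^2=\id$, and likewise $R^2=\id$; and $LR = RL$ follows from $L(R(x)) = a\ast(x\ast a) = (a\ast x)\ast a = R(L(x))$. Finally $L\neq R$ because (W3) says $a$ is noncentral, i.e. $a\ast x\neq x\ast a$ for some $x$; this $x$ must lie in $S$ (for $x\in\{a,e\}$ the products coincide by (W1) and neutrality), so $L(x)\neq R(x)$. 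Having produced $(L,R)$ satisfying all the hypotheses, I would check that plugging this data into the construction recovers $(M,\ast)$: the four cases of the construction formula match the original operation by the very definitions of $L,R$ and by (W1).

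The main obstacle I anticipate is the associativity verification in the forward direction — it is the only genuinely multi-case computation, and one has to be careful that every combination of arguments from $\{$elements of $S$, the element $a$, the element $e\}$ is covered and that the right hypothesis on $(L,R)$ is invoked in each. In particular the subcases with two occurrences of $a$ (e.g. $(a\ast x)\ast a$ versus $a\ast(x\ast a)$, giving $R(L(x))$ versus $L(R(x))$) are where $LR=RL$ is needed, and the subcases with $a\ast a$ appearing as an inner product are where $L^2=R^2=\id$ enters; it would be easy to overlook one of these. Everything else is essentially bookkeeping. I would organize the associativity check by the number $k\in\{0,1,2,3\}$ of arguments equal to $e$ (reducing immediately to $k=0$), then by the number of arguments equal to $a$.
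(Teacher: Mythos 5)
Your proposal is correct and follows essentially the same route as the paper's self-contained proof of Theorem~\ref{thm:44-BasPetY}: for the converse you extract the bitranslation $L(x)=a\ast x$, $R(x)=x\ast a$ from the W-monoid and derive $L^2=R^2=\id$, $LR=RL$, $L\neq R$ from (W1)--(W3) and associativity, and for the forward direction you verify associativity by cases together with (W1)--(W3), exactly as the paper does. (The paper also offers a shorter proof via Yoshida's theorem on ideal extensions, but your elementary argument coincides with its second, self-contained proof.)
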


\begin{proof}
By Proposition~\ref{prop:3MonRees}, a W-monoid is an ideal extension $(M,\ast)$ of a semigroup $(S,\circ)$ by $T=\{a,0,e\}$, with the additional properties that $e$ is a neutral element for $\ast$ and $a$ is noncentral for $\ast$. We first observe that $T^* =\{a,e\}$ forms a group, and hence the \emph{ramification set} \cite[p.~68]{Pet73} of $T$ is empty. Consequently, such an extension is determined by a homomorphism $\theta\colon T^*\to\Omega(S)$, which maps $T^*$ onto a set of permutable bitranslations. Since $e$ is the neutral element for $\ast$, we must have $\theta(e)=(\id,\id)$. Moreover, setting $\theta(a)=(L,R)$, the map $\theta$ is an homomorphism if and only if $L^2=R^2=\id$. In addition, it has \emph{permutable values} \cite[p.~68]{Pet73} if and only if $LR=RL$. Note also that the condition $L\neq R$ is equivalent to $a$ being noncentral for $\ast$. Finally, Yoshida's result (see \cite[Theorem~III.2.2]{Pet73}) guarantees that $(M,\ast)$ is a W-monoid and that all W-monoids can be constructed in this way.
\end{proof}

We can actually prove Theorem~\ref{thm:44-BasPetY} without making use of Yoshida's theory or any semigroup theory beyond the basic definition of a bitranslation. We now present such an elementary proof.

\begin{proof}[Self-contained proof of Theorem~\ref{thm:44-BasPetY}]
Assume first that $(M,\ast)$ is a W-monoid with neutral element $e$, and let $a \in M$ be the element provided by Definition~\ref{de:Wmo55}. Conditions (W1) and (W2) guarantee that $(S,\circ)$, with $S := M \setminus \{a,e\}$, is a subsemigroup of $M$ (here $\circ$ denotes the restriction of $\ast$ to $S$ in order to be consistent with the notation of the theorem).
Let $L(y) = a \ast y$ and $R(x) = x \ast a$; then $(L,R)$ is a bitranslation of $M$. 
It is clear from (W1) and (W2) that $S$ is closed under $L$ and $R$, hence their restriction to $S$ constitutes a bitranslation of $(S,\circ)$.
Associativity immediately implies $LR=RL$, while associativity together with $a \ast a = e$ shows that $L^2 = R^2 = \id$.
Condition (W3) ensures that $L \neq R$, and the formula for $\ast$ stated in the theorem follows from the definition of $\circ$, $L$, and $R$.

Conversely, if $(M,\ast)$ is constructed as described in the theorem, then a simple verification yields that $\ast$ is associative (using the associativity of $\circ$, the definition of a bitranslation, and the assumptions $L^2=R^2=\id$ and $LR=RL$).
The definition of the operation $\ast$ shows that $x \ast y \in \{a,e\}$ can happen only if $x,y \in \{a,e\}$, and then (W1) and (W2) follow from $a \ast a = e$ and from the fact that $e$ is the neutral element for $(M,\ast)$.
Finally, $L \neq R$ implies condition (W3).
\end{proof}

In the following example, we construct a W-monoid using Theorem~\ref{thm:44-BasPetY} that cannot be obtained via Theorem~\ref{thm:42CliffPres419}.

\begin{example}
Let $(X,\diamond)$ be a monoid with a neutral element $\id$ and let $i,j\in X\setminus\{\id\}$ be elements satisfying $i\diamond i=j\diamond j=\id$. Consider also the semigroup $(S,\circ)$, where $S=X\times X$ and $\circ\,\colon S^2\to S$ is the binary operation  defined by
$$
(x,y)\circ (x',y') ~=~ (x\diamond x',y').
$$
It is then easy to verify that the maps $L\colon S\to S$ and $R\colon S\to S$ defined by
$$
L((x,y)) ~=~ (i\diamond x,y)\quad\text{and}\quad R((x,y)) ~=~ (x\diamond i,y\diamond j)
$$
satisfy the properties stated in Theorem~\ref{thm:44-BasPetY}. For instance, for any $x,y\in X$, we have
$$
R^2((x,y)) ~=~ R(x\diamond i,y\diamond j) ~=~ (x\diamond i\diamond i,y\diamond j\diamond j) ~=~ (x,y).
$$
Using Theorem~\ref{thm:44-BasPetY}, we immediately obtain a W-monoid. If $\abs{X} \geqslant 2$, then $S$ does not have a neutral element, hence the corresponding W-monoid cannot be obtained via Theorem~\ref{thm:42CliffPres419} (see Remark~\ref{rem:Order2}).
As a concrete example, we can take $X=\{0,1\}\cong\Z_2$ and $i=j=1$, and the binary operation $\diamond$ is the addition modulo $2$.
\end{example}


\end{document}